\author{David Holmes}
\def\p{\mathbb{P}}%changed; used to be P^1
\def\oo{\mathcal{O}}
\def\qq{\mathbb{Q}}
\def\ccc{\mathbb{C}}
\def\uv{_{\nu}}
\def\defeq{\stackrel{\text{\tiny def}}{=}}
\DeclareMathOperator{\Div}{Div}
\DeclareMathOperator{\divisor}{div}
\DeclareMathOperator{\Spec}{Spec}
\DeclareMathOperator{\ord}{ord}
\DeclareMathOperator{\res}{res}
\DeclareMathOperator{\supp}{Supp}
\DeclareMathOperator{\N}{N}
\DeclareMathOperator{\im}{Im}
\DeclareMathOperator{\Jac}{Jac}
\DeclareMathOperator{\inv}{inv}
\newcommand{\linequiv}{\sim_{\text{lin}}}
\newcommand{\cc}{\mathscr{C}}
\newcommand{\intersect}[3][\nu]{\iota_{#1}\left(#2,#3\right)}
\newcommand{\abs}[1]{\lvert#1\rvert}
\newcommand{\listdef}[3][1]{#2_{#1}, \ldots , #2_{#3}}
\newtheorem{df}{Definition}
\newtheorem{prop}[df]{Proposition}
\newtheorem{lema}[df]{Lemma}
\newtheorem{thm}[df]{Theorem}
\newtheorem{problem}[df]{Problem}
\date{\today}
\thanks{The author is supported by the EPSRC}
\keywords{}
\subjclass[2010]{Primary 14G40, Secondary 11G30, 11G50, 37P30}
\title[Heights on hyperelliptic Jacobians]{Computing N\'eron-Tate heights of points on hyperelliptic Jacobians}
\newcounter{nootje}
\begin{document}
%\end{abstract}

\begin{abstract}
It was shown by Faltings (\cite{faltings1984calculus}) and Hriljac (\cite{hriljac1985heights}) that the 
N\'eron-Tate height of a point on the Jacobian of a 
curve can be expressed as the self-intersection of 
a corresponding divisor on a regular model of the curve. We make this explicit and use it to give an algorithm for computing N\'eron-Tate heights on Jacobians of hyperelliptic curves. To demonstrate the practicality of our algorithm, we illustrate it by computing N\'eron-Tate heights on Jacobians of hyperelliptic curves of genus $1 \le g \le 9$.
\end{abstract}
%\tableofcontents

\maketitle

\newcommand{\RR}{\mathfrak{R}}
\newcommand{\variety}[2][]{\textrm{zeros}_{#1}\left(#2\right)}

\section{Introduction}

The problem considered in this paper is that of computing the N\'eron-Tate (or canonical) height of a point on the Jacobian of a curve of genus greater than 2. 
For curves of genus 1 and 2 the existing methods (classical in genus 1, and due to Flynn, Smart, Cassels and others in genus 2 \cite{cassels-and-flynn} and \cite{flynn_and_smart}) make use of explicit equations for projective embeddings of Jacobians,
and have proven to be very successful in practice. It does not seem practical
at present to give explicit equations for Jacobians of curves
of genus 3 and above (see \cite{stubbsthesis} and \cite{mullerthesis} for recent attempts and an examination of the difficuties faced). We propose an alternative approach to
computing the N\'eron-Tate height based on Arakelov theory.
To demonstrate that our method is practical, we give 
numerical examples where we compute heights of points on Jacobians of hyperelliptic
curves of genus $1 \leq g \leq 9$. 

The main application of these computations is at 
present the computation of regulators of 
hyperelliptic Jacobians up to rational squares; 
this will allow the verification of the 
conjectures of Birch and Swinnerton-Dyer up to rational squares. 
The author has also developed an algorithm 
to bound the difference between the na\"ive and N\'eron-Tate heights, 
again using Arakelov theory \cite{holmes2010saturation}. 
Together, the algorithms allow a number of further applications 
such as computing a basis of the Mordell-Weil group 
of a hyperelliptic Jacobian, computing integral points on 
hyperelliptic curves (see \cite{integral_points_on_hyp}) and of 
course verifying the conjectures of Birch and Swinnerton-Dyer, up to the
order of the Shafarevich-Tate group.

Whilst the theoretical sections of this paper are largely independent 
of the curve chosen, the very geometric nature of Arakelov theory 
means that the details of the algorithm, and especially its implementation, 
will depend greatly on the geometry of the curve considered. 
Let $C$ be a curve defined over a number field $k$.
Our method makes a number of computational assumptions:
\begin{enumerate}
\item[(a)] We have a uniform and convenient way of representing divisor classes
on the curve $C$.
\item[(b)] We are able to rigorously compute abelian integrals on $C$ to 
any required (reasonable) precision.
\item[(c)] We are able to write down a regular proper model $\cc$ 
for 
$C$ over the integers $\oo_k$ (though this need not be minimal).
\item[(d)] For each non-Archimedean place $\nu$ that is 
a prime of bad reduction for $\cc$, we are able to
compute the intersection matrix of the special fibre $\cc_\nu$.
\item[(e)] We have a way of computing Riemann-Roch spaces of divisors
on $C$.
%\check{Any other assumptions put them here... can't think of any more}
\end{enumerate}
Assumptions (a) and (b) cause us to restrict our attention
to hyperelliptic curves. The computer algebra package {\tt MAGMA} \cite{MAGMA}
has in-built commands to deal with all of these for hyperelliptic curves
over number fields. For (a), it uses
Mumford's representation 
(see \cite[3.19]{mumford1984tata}) for divisor classes. A {\tt MAGMA} implementation by P. van Wamelen is used for integral computations in (b); this does not use the usual numerical integration techniques as these are inherently non-rigorous; instead, hyperelliptic functions are locally approximated by truncated power series and formally integrated. 
The computation of the intersection matrices of the special
fibres at the bad places is produced by {\tt MAGMA}
as by-product
of the computation of the regular proper model, implemented by S. Donnelly using techniques as in \cite[Chapter 8]{liu2006algebraic}.
For computing Riemann-Roch spaces, {\tt MAGMA} makes use of the
method of Hess \cite{hess}.

In order to simplify the exposition,
we restrict our attention to curves with a rational Weierstrass point. 
As such, unless otherwise stated, $C$ will denote an 
odd-degree hyperelliptic curve over a number field $k$, 
and $\cc$ a proper regular---though not necessarily minimal---model 
of $C$ over the integers $\oo_k$. 

   The author wishes to thank Samir Siksek for introducing him to this
fascinating problem, as well as for much helpful advice and a careful
reading of this manuscript. Thanks are also due to Martin Bright and
Jan Steffen M\"{u}ller amongst others for very helpful discussions.

\section{A Formula of Faltings and Hriljac}\label{sec:formula}
As before $C$ is an odd degree hyperelliptic curve over a number field
$k$. We fix once and for all the following notation:
\begin{itemize}
\item $M_k$ a proper set of places of $k$;
\item $M_k^0$ the subset of non-Archimedean places of $k$;
\item $M_k^\infty$ the subset of Archimedean places of $k$;
\item $\kappa(\nu)$ the residue field at a $\nu \in M_k^0$; 
\item $\iota\uv$ the usual intersection pairing 
between divisors over $\nu \in M_k^0$ 
(see \cite[IV,\S1]{lang1988introduction}). 
\end{itemize}

We shall make use of the following result which can 
be found in Lang's book \cite[IV, \S2]{lang1988introduction}. 
\begin{thm} (Faltings and Hriljac)\label{thm:basic}
Let $D$ be a degree zero divisor on $C$, and let
$E$ be any divisor linearly equivalent to $D$ but with disjoint
support. 
Then the height with respect to twice the $\vartheta$-divisor of the point on $\Jac(C)$ corresponding to $D$ is given by
%\check{Why/why not write $\hat{h}([D])$? I think of points on the jacobian as sheaves rather than divisor classes, I guess this is a matter of taste?}
\begin{equation*}
 \hat{h}_{2\vartheta}\left(\oo(D)\right) = %\sum_{\nu \in M_k}\Span{D,E}\uv =
 -\sum_{\nu \in M_k^0}\log\abs{\kappa(\nu)}\intersect{\overline{D}+\Phi(D)}{\overline{E}} - \frac{1}{2}\sum_{\nu\in M_k^\infty}g_{D,\nu}(E)
\end{equation*}
where $\Phi$ and $g_{D,\nu}$ are defined as follows: 

- $\Phi$ sends a divisor on the curve $C$ to an element of the group of fibral $\mathbb{Q}$-divisors on $\cc$ with order zero along the irreducible component containing infinity, such that for any divisor $D$ on $C$ (with Zariski closure $\overline{D}$) and fibral divisor $Y$ on $\cc$, we have $\intersect{\overline{D}+\Phi(D)}{Y}=0$. 

- $g_{D,\nu}$ denotes a Green function for the divisor $D$, when $C$ is viewed 
as a complex manifold via the embedding $\nu$. 
\end{thm}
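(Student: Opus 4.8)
The plan is to obtain the formula from N\'eron's decomposition of the canonical height pairing into local symbols, and then to compute those symbols place by place: at the finite places as intersection numbers on the regular model $\cc$, and at the infinite places via Green's functions. This is the strategy of Hriljac \cite{hriljac1985heights} and Faltings \cite{faltings1984calculus}, written up in \cite[IV]{lang1988introduction}; below is a sketch of how I would assemble it.

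First I would reduce to the N\'eron--Tate pairing. Since $\deg D = 0$ and $E \linequiv D$, the classes $\oo(D)$ and $\oo(E)$ coincide in $\Jac(C)$, and in Lang's normalisation $\hat{h}_\vartheta(\oo(D))$ is the N\'eron--Tate self-pairing of this class; because that pairing depends only on rational equivalence one may evaluate it on the pair $(D,E)$, whose disjoint support makes every local symbol below well defined. Then, invoking N\'eron's theory of local heights (\cite[IV,\S1]{lang1988introduction}), the pairing is a \emph{finite} sum $\sum_{\nu \in M_k}\langle D,E\rangle_\nu$ of local symbols, and the product formula shows this is independent of the choice of $E$ in its class (replacing $E$ by $E+\divisor(f)$ alters the $\nu$-th term by a $\log\abs{f}_\nu$-contribution, and these cancel). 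It remains to identify the $\langle D,E\rangle_\nu$; the overall sign and the factor $\tfrac12$ on the Archimedean sum are then a bookkeeping matter governed by Lang's conventions, which I would carry through.

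For $\nu \in M_k^\infty$ this is essentially the definition: viewing $C$ as a compact Riemann surface via $\nu$, the local symbol equals $-\tfrac12 g_{D,\nu}(E)$, where $g_{D,\nu}$ is a Green's function for $D$; any choice works because $\deg E = 0$ makes the value independent of the reference volume form. For $\nu \in M_k^0$ I would first check that $\Phi(D)$ exists and is unique: on an arithmetic surface the intersection form restricted to the fibral divisors of $\cc_\nu$ is negative semi-definite with kernel generated by the whole fibre (Zariski's lemma), hence negative \emph{definite} modulo the fibre, and the compatibility condition $\intersect{\overline D}{\cc_\nu}=0$ needed to solve for $\Phi(D)$ holds because $\deg D = 0$; so there is a unique fibral $\qq$-divisor $\Phi(D)$ with vanishing coefficient on the component through infinity and $\intersect{\overline D + \Phi(D)}{Y}=0$ for every fibral $Y$. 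The heart of the matter is then the identity
\begin{equation*}
\langle D, E\rangle_\nu = -\log\abs{\kappa(\nu)}\,\intersect{\overline D + \Phi(D)}{\overline E}.
\end{equation*}
I would establish it by showing that the right-hand side, as a function of the pair $(D,E)$, is symmetric, biadditive, and vanishes whenever one argument is principal --- the last point using precisely that $\overline D + \Phi(D)$ has zero intersection with all fibral divisors, so that only the ``horizontal part'' of $\divisor_{\cc}(f)$ contributes and reduces to a valuation of $f$ --- and then that it matches the N\'eron symbol on a generating set of classes; equivalently, one recognises $\oo(\overline D + \Phi(D))$ as a line bundle on $\cc$ whose induced local height is the N\'eron symbol, which is the adelic--metric packaging of Hriljac's argument.

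The one genuinely non-formal step is this last identity: matching the N\'eron symbol at a finite place with the arithmetic intersection number of the \emph{fibrally corrected} Zariski closures. The subtlety is entirely in the correction term $\Phi(D)$ --- one must verify that $\overline D + \Phi(D)$ is the right model-theoretic representative of the degree-zero class and that replacing $\overline D$ by it leaves the pairing unchanged. This is exactly where regularity of $\cc$ and negative-definiteness of the fibral form are used, and it is the content for which, in practice, one appeals to \cite{faltings1984calculus,hriljac1985heights} and \cite[IV,\S2]{lang1988introduction} rather than reproving from scratch.
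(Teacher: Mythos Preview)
Your sketch is correct and follows exactly the Faltings--Hriljac argument as packaged in \cite[IV,\S2]{lang1988introduction}, which is precisely what the paper does: it does not prove this theorem at all but simply quotes it from Lang, so there is nothing further to compare. If anything, you have supplied more detail than the paper, which treats the result as a black box input to the algorithm.
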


\bigskip

Our strategy for computing the N\'eron-Tate height of a degree zero divisor $D$
using the above formula is as follows:
\begin{enumerate}
\item Determine a suitable divisor $E$ as above. This is
explained in Section~\ref{sec:step_1}.
\item Determine a finite set $\RR\subset M_k^0$ such that for 
non-Archimedean places $\nu$ \textbf{not} in $\RR$, 
we have $\intersect{\overline{D}+\Phi(D)}{\overline{E}} = 0$. 
This is explained in Section~\ref{sec:step_2}.
\item Determine $\intersect{\Phi(D)}{\overline{E}}$ for $\nu \in \RR$. 
This is explained in Section~\ref{sec:step_3}.
\item Determine $\intersect{\overline{D}}{\overline{E}}$ for $\nu \in \RR$. 
This is explained in Section~\ref{sec:step_4}.
\item Compute the Green function $g_{D,\nu}(E)$ for Archimedean $\nu$. 
This is explained in Section~\ref{sec:step_5}.
\end{enumerate}
In the final section we give a number of worked examples.

By a straightforward Riemann-Roch computation, 
we can write down 
a divisor in Mumford form that is linearly equivalent
to $D$. We replace $D$ by this Mumford divisor.
Thus we may suppose that
$D=D^\prime-d\cdot\infty$ where $d \le g$ and $D^\prime= \variety{a(x),y-b(x)}$ with $a(x)$, $b(x) \in k[x]$ satisfying certain 
conditions as given in \cite[3.19, Proposition 1.2]{mumford1984tata}.
%This can be done using  MAGMA's `RiemannRochSpace' commands, though in practice $D$ will usually be given in the above form anyway. 

\section{Step 1: Choosing $E$} 
\label{sec:step_1}
If the support of $D^\prime$ does not contain Weierstrass points, 
choose a $\lambda \in k$ such that $a(\lambda)\ne 0$, and set 
\begin{equation}
 E = \inv(D^\prime)-\frac{d}{2}\variety{x-\lambda}, 
\end{equation}
where $\inv$ denotes the hyperelliptic involution. If $d$ is odd, this is not a divisor but a $\qq$-divisor. 
This is unimportant, but if the reader is troubled he or she should multiply $E$ by 2, and then appeal to the 
quadraticity of the height. 

If the support $D^\prime$ does contain Weierstrass points, either:

a) replace $D$ by a positive multiple of itself to avoid this, or

b) add a divisor of order $2$ to $D$ to remove them.

 (a) is simpler to implement, (b) generally faster computationally. 

Now $E \linequiv -D$, and so 
\begin{equation*}
 \hat{h}_{2\vartheta}\left(\oo(D)\right) = %\sum_{\nu \in M_k}\Span{D,E}\uv =
 \sum_{\nu \in M_k^0}\log\abs{\kappa(\nu)}\intersect{\overline{D}+\Phi(D)}{\overline{E}} + \frac{1}{2}\sum_{\nu\in M_k^\infty}g_{D,\nu}(E).
\end{equation*}
This is seen by viewing the expression on the right hand side as the global N\'eron pairing on divisor classes, which is a quadratic form; since $E$ is linearly equivalent to $-D$ a minus sign results, which cancels with those in Theorem \ref{thm:basic} to yield the above expression.

\section{Step 2: Determining a Suitable $\RR$}
\label{sec:step_2}
We wish to find a finite set $\RR\subset M_k^0$ such that 
\begin{equation}
 \intersect{\overline{D}+\Phi(D)}{\overline{E}}=0
\end{equation}
for all $\nu \notin \RR$.
 To make this as general as possible, we will for 
the moment just assume $C$ is a smooth curve over $k$ in the weighted 
projective space $\p_k(\listdef[0]{a}{n})$. 
Let $C^\prime$ denote its closure in $\p_{\oo_k}(\listdef[0]{a}{n})$. 
Let $Q_1$ denote the set of places of bad reduction for $C^\prime$, 
outside which $C^\prime$ is smooth over $\oo_k$. 

It suffices to solve our problem for prime divisors, 
as we can then obtain results for general $D$ and $E$ easily. 
Let $X$ and $Y$ be prime divisors on $C$, 
and let $d$ be the degree of $Y$. 
Let $\listdef{H}{d+1}$ be a collection of 
weighted integral homogeneous forms of 
degrees $\listdef{e}{d+1}>0$ on $\p_{\oo_k}(\listdef[0]{a}{n})$, 
geometrically integral on the generic fibre and coprime, 
such that for all pairs $i \ne j$, we have on the generic fibre 
that $H_i \cap H_j \cap C = \emptyset$ (we will confuse $H_i$ 
with the hypersurface it defines). 

Let $Q_2$ be the set of $\nu \in M_k^0\setminus Q_1$ such that
\begin{equation*}
  (\overline{H}_i)\uv \cap (\overline{H}_j)\uv \cap C^\prime\uv \ne 
\emptyset.
\end{equation*}
for some $1 \leq i$, $j \leq d+1$.
Note that $\overline{H}_i \cap \overline{H}_j$ is a zero-dimensional scheme, and so is 
easy to compute in practice.

Let $FF(Y)$ denote the function field of $Y$, a finite extension of $k$,
and let $\N_Y: FF(Y) \rightarrow k$ denote the norm map. 
In practice, we can use Gr\"{o}bner bases to find an isomorphism 
$FF(Y) \stackrel{\sim}{\rightarrow} k[t]/\alpha(t)$,
and so can readily compute $\N_Y$. 

Let $Q_3$ be the set of $\nu \in M^0_k\setminus (Q_1 \cup Q_2)$
such that 
\begin{equation*}
 \ord\uv\left(\N_Y\left(H_i^{e_{i+1}}/H_{i+1}^{e_i}\right)\right)\ne0 
\end{equation*}
for some $1 \leq i \leq d$; while $Q_2$ detects common points of intersection of $\overline{H}_i$, $\overline{H}_j$ and $C'$ over $\nu$, $Q_3$ detects when the intersection numbers of $\overline{H}_i^{e_i+1}$ and $\overline{H}_{i+1}^{e_i}$ with $Y$ over $\nu$ are different. 

Let $\listdef{f}{r}$ be integral weighted 
homogeneous equations for $X$ such 
that no $f_i$ vanishes on $Y$, and set $\deg(f_j)=d_j$. 
Finally let $Q_4$ be the set of $\nu \in M^0\setminus (Q_1 \cup Q_2 \cup Q_3)$
such that
\begin{equation*} 
\ord\uv\left(\N_Y\left(f_j^{e_{1}}/H_{1}^{d_j}\right)\right)\ne0 
\end{equation*}
for some $1 \leq j \leq r$. 

\begin{lema} Set 
\[
\RR = Q_1 \cup Q_2 \cup Q_3\cup Q_4.
\] 
If $\nu \notin \RR$ then 
$\intersect{\overline{X}+\Phi(X)}{\overline{Y}} = 0$.
\end{lema}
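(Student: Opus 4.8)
The plan is to split $\intersect{\overline{X}+\Phi(X)}{\overline{Y}}$ into its two pieces and handle them separately. First I would note that for $\nu\notin Q_1$ the closure $C^\prime$ is smooth over $\oo_k$ near $C^\prime\uv$, hence regular there and agreeing with the regular model, so we may compute the $\nu$-local intersection on $C^\prime$, with $C^\prime\uv$ a smooth proper curve over $\kappa(\nu)$. Since $C$ is geometrically integral, so is $C^\prime\uv$; in particular $C^\prime\uv$ is irreducible, hence it \emph{is} the component ``containing infinity'', and the defining property of $\Phi$ forces the $\nu$-component of $\Phi(X)$ to vanish. Thus for $\nu\notin Q_1$ we have $\intersect{\overline{X}+\Phi(X)}{\overline{Y}}=\intersect{\overline{X}}{\overline{Y}}$, and the task reduces to showing that $\overline{X}$ and $\overline{Y}$ are disjoint in $C^\prime\uv$ whenever $\nu\notin\RR$.

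Next I would use the $d+1$ hypersurfaces to find one that avoids $Y$ modulo $\nu$. Fix $\nu\notin Q_1\cup Q_2$, and let $\overline{D}_{H_i}$ be the horizontal divisor on $C^\prime$ that is the closure of $H_i\cap C$. By the definition of $Q_2$ the divisors $\overline{D}_{H_1},\dots,\overline{D}_{H_{d+1}}$ have pairwise disjoint supports in the fibre $C^\prime\uv$. On the other hand $\overline{Y}$ is finite of degree $d=\deg Y$ over $\oo_k$, so $\overline{Y}\uv$ has at most $d$ geometric points. If every $\overline{D}_{H_i}$ met $\overline{Y}$ in $C^\prime\uv$ we would produce $d+1$ geometric points of $\overline{Y}\uv$, pairwise distinct by the disjointness, a contradiction; so there is an index $i_0$ with $\intersect{\overline{D}_{H_{i_0}}}{\overline{Y}}=0$, and in particular $H_{i_0}$ does not vanish on $Y$. (This counting is the point of taking $d+1$ hypersurfaces.)

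The core step is a norm computation using this $i_0$. For each $j$ the ratio $f_j^{e_{i_0}}/H_{i_0}^{d_j}$ is a homogeneous form of degree $0$, hence a rational function $\psi_j$ on $C$; since neither $f_j$ nor $H_{i_0}$ vanishes on $Y$, it restricts to a nonzero element of $FF(Y)$. Using regularity of $C^\prime$ at $\nu$ and taking Zariski closures,
\[
\divisor_{C^\prime}(\psi_j)=e_{i_0}\,\overline{D}_{f_j}-d_j\,\overline{D}_{H_{i_0}}+m_j\,C^\prime\uv ,
\]
where $\overline{D}_{f_j}$ is the closure of $\{f_j=0\}\cap C$ and $m_j\ge 0$, since the $H_i$ are taken primitive and so the fibral term picks up only an effective contribution from the content of $f_j$. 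The $\nu$-local intersection of a principal divisor with a horizontal divisor is computed by a norm (\cite[IV]{lang1988introduction}): $\intersect{\divisor_{C^\prime}(\psi_j)}{\overline{Y}}=\ord\uv\N_Y(f_j^{e_{i_0}}/H_{i_0}^{d_j})$, which vanishes because $\nu\notin Q_3$. Expanding the left side with the displayed formula and using $\intersect{\overline{D}_{H_{i_0}}}{\overline{Y}}=0$ gives
\[
e_{i_0}\intersect{\overline{D}_{f_j}}{\overline{Y}}+m_j\intersect{C^\prime\uv}{\overline{Y}}=0 .
\]
Here $\intersect{C^\prime\uv}{\overline{Y}}=d\ge 0$, $m_j\ge 0$, and $\intersect{\overline{D}_{f_j}}{\overline{Y}}\ge 0$ (effective horizontal divisors with no common component, since $f_j$ does not vanish on $Y$), so $\intersect{\overline{D}_{f_j}}{\overline{Y}}=0$. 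As $f_j$ vanishes on $X$ we have $\overline{X}\le\overline{D}_{f_j}$, whence $0\le\intersect{\overline{X}}{\overline{Y}}\le\intersect{\overline{D}_{f_j}}{\overline{Y}}=0$; together with the first paragraph, $\intersect{\overline{X}+\Phi(X)}{\overline{Y}}=0$.

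I expect the main obstacle to be care rather than ideas: fixing the precise normalisation of the ``local intersection $=$ norm'' identity, confirming that $C^\prime\uv$ is irreducible for $\nu\notin Q_1$, and checking that a fibral zero or pole of some $\psi_j$ along $C^\prime\uv$ does no harm (it does not, because such a defect is an effective multiple of the irreducible fibre $C^\prime\uv$, whose contribution to the last display is visibly nonnegative). The one genuinely substantive point is the elementary counting of the second paragraph.
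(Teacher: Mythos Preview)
Your proof is correct and uses the same three ingredients as the paper: the vanishing of $\Phi$ outside $Q_1$, Lang's ``norm equals local intersection'' identity, and the pigeonhole count of $d+1$ hypersurface slices against a degree-$d$ fibre. The organisation, however, is dual to the paper's. The paper argues by contradiction: it assumes $(\overline X)_\nu\cap(\overline Y)_\nu\neq\emptyset$, fixes one defining equation $f_{j_0}$ of $X$, and uses the norm identity for every $i$ to force $\intersect{\variety[C']{H_i}}{\overline Y}>0$ for all $i$; the pigeonhole then yields the contradiction. You instead run the argument directly: first apply pigeonhole to select a single $i_0$ with $\overline{D}_{H_{i_0}}\cap\overline Y_\nu=\emptyset$, and then use the norm identity with this fixed $i_0$ and arbitrary $j$ to conclude $\intersect{\overline{D}_{f_j}}{\overline Y}=0$, hence $\intersect{\overline X}{\overline Y}=0$. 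Your version is slightly cleaner and also makes explicit the possible fibral term $m_j\,C'_\nu$ coming from the content of $f_j$, which the paper silently absorbs into $\variety[C']{f_{j_0}}$. One small remark: your justification ``since the $H_i$ are taken primitive'' is not stated in the setup, but in fact $\nu\notin Q_2$ already forces each $H_i$ to have no $\nu$-content (else $(\overline H_i)_\nu$ is the whole fibre and meets every $(\overline H_j)_\nu\cap C'_\nu$), so $m_j\ge 0$ holds as you need.
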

\begin{proof}
 Outside $Q_1$, $C^\prime$ is smooth over $\oo_k$, 
and hence it is regular and all its fibres are geometrically integral. 
As a result, 
\begin{equation}
 \intersect{\overline{X}+\Phi(X)}{\overline{Y}} = \intersect{\overline{X}}{\overline{Y}} \text{ for } \nu \notin \RR.
\end{equation}
Suppose $\intersect{\overline{X}}{\overline{Y}} \ne 0$, so $(\overline{X})\uv \cap (\overline{Y})\uv \ne \emptyset$. We will show $\nu \in \RR$. 

Recall that $X$ and $Y$ are cycles on $C'$ of relative dimension zero over $\oo_k$, and so their fibres over closed points are cycles of dimension zero.  Observe that, since the $f_j$ are integral, we must have $\variety{f_j} \supset \overline{X}$ for all $j$. Hence there is some $j_0$ such that $f_{j_0}$ vanishes on some irreducible component of (equivalently, closed point in) $(\overline{Y})\uv$ (in fact, this holds for any $j_0$). As a result, $\intersect{\variety[C^\prime]{f_{j_0}}}{\overline{Y}} > 0$, since we assume $\variety[C^\prime]{f_{j_0}}$ and $\overline{Y}$ have disjoint support on the generic fibre. Now suppose $\nu \notin \RR$. Then for all $i$, since $\nu \notin Q_4$, we must have 
\begin{equation*}
\ord\uv\left(\N_Y\left(f_{j_0}^{e_i}/H_i^{d_{j_0}}\right)\right) = 0. 
\end{equation*}
Hence by \cite[III, Lemma 2.4, p56]{lang1988introduction}, we see that for all $i$, 
\begin{equation*}
0=\intersect{\divisor_{C^\prime}\left( \frac{f_{j_0}^{e_i}}{H_i^{d_{j_0}}}\right)}{\overline{Y}} = e_i \cdot \intersect{\variety[C^\prime]{f_{j_o}}}{\overline{Y}} - d_{j_0}\cdot \intersect{\variety[C^\prime]{H_i}}{\overline{Y}}. 
\end{equation*}

Now as $e_i>0$ and $d_{j_0} >0$, we see that for all $i$, 
\begin{equation}\label{eqn:con}
  \intersect{\variety[C^\prime]{H_i}}{\overline{Y}} >0,
\end{equation}
so every $\variety[C^\prime]{H_i}$ meets $\overline{Y}$. But the zero-dimensional cycles $\variety[C^\prime]{H_i} \cap \overline{Y}$ are pairwise disjoint since $\nu \notin Q_2$, and $\deg(Y)=\deg((\overline{Y})\uv) = d$. Moreover, $\nu \notin Q_3$ shows that the $(d+1)$ cycles $\variety{H_i} \cap \overline{Y}$ are disjoint, and so cannot all meet the zero-dimensional cycle $(\overline{Y})\uv$ as it has degree $d$; this contradicts Equation \ref{eqn:con}, and so we are done. 
\end{proof}
Note that if $a_0=\ldots = a_n = 1$, so $C$ lives in `unweighted' projective space, then the $H_i$ should all be taken to be hyperplanes and the argument simplifies somewhat. 

\section{Step 3: Determining $\intersect{\Phi(D)}{\overline{E}}$}
\label{sec:step_3}

We next discuss the computation of the term $\intersect{\Phi(D)}{\overline{E}}$
for a non-Archimedean place $\nu$. Recall that by our assumptions in
Section~\ref{sec:formula} we are able to write down a proper
regular model $\cc$ and the intersection matrix for $\cc\uv$ for
all bad places $\nu$.
 Clearly $\intersect{\Phi(D)}{\overline{E}}$
vanishes if $\cc\uv$ is integral, in particular if $\nu$ is a good prime. 
Suppose $\nu$ is a bad prime. Since we have the intersection matrix
of $\cc\uv$,
the problem is reduces to the following:

\begin{problem}
 Given a finite place $\nu$, a horizontal divisor $X$ and a prime fibral divisor $Y$ over $\nu$, compute $\intersect{X}{Y}$. 
\end{problem}

We may replace the base space $S=\Spec(\oo_k)$ by its completion $\hat{S}$ at $\nu$ (\cite[III, Proposition 4.4, page 65]{lang1988introduction}), 
and we may further assume that $X$ is 
a prime horizontal divisor on $\cc \times_S \hat{S}$. By Lemma~\ref{lem:h}
 below, this means that the support of $X\uv$ is a closed point of $\cc\uv$, and so we can find an affine open neighbourhood $U = \Spec(A)$ of $X\uv$ in $\cc\times_S \hat{S}$. 

\begin{lema}\label{lem:h}
 If $X$ is a prime horizontal divisor on an arithmetic surface $\mathscr{X}$ over a p-adic local ring $(\oo,\nu)$, then the support of $X\uv$ is a prime divisor on $\mathscr{X}\uv$ (in other words, $X\uv$ is irreducible but not necessarily reduced). 
\end{lema}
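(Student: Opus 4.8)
The statement is local on the base, so I would first replace $\mathscr{X}$ by the spectrum of the local ring $(\oo,\nu)$ with maximal ideal generated by a uniformiser $\pi$, and write $\mathscr{X}_\nu$ for the special fibre, i.e.\ the closed subscheme cut out by $\pi=0$. Since $X$ is prime and horizontal, it is an integral closed subscheme of $\mathscr{X}$, flat over $\oo$, of relative dimension zero, so $X\cong\Spec B$ for an integral domain $B$ finite and flat over $\oo$. The support of $X_\nu$ is then $\Spec(B/\pi B)$, and I need to show this topological space is irreducible, equivalently that $B/\pi B$ has a unique minimal prime.

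The key input is that $B$ is a one-dimensional integral domain that is finite over the complete (or at least Henselian/complete, as we arranged in the excerpt by passing to $\hat S$) discrete valuation ring $\oo$. Here I would invoke the standard structure theory: a finite torsion-free module over a complete DVR is free, and a complete local domain that is finite over $\oo$ is itself local. Concretely, the normalisation $\widetilde B$ of $B$ is a semilocal Dedekind domain finite over $\oo$; but $B$ itself need not be normal, which is exactly why $X_\nu$ can fail to be reduced. The cleanest route is: $B$ is a domain, finite over the \emph{complete} DVR $\oo$, hence $B$ is a complete semilocal ring, and a complete semilocal \emph{domain} is in fact local — because the idempotents of its completion (which is itself) would split off a nontrivial factor, contradicting integrality. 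Thus $B$ has a unique maximal ideal $\mathfrak m$. Since $\pi\in\mathfrak m$ (as $B/\pi B$ is a nonzero ring supported over the closed point of $\Spec\oo$), the ring $B/\pi B$ is local with maximal ideal $\mathfrak m/\pi B$, and a local ring has an irreducible spectrum. Hence $X_\nu=\Spec(B/\pi B)$ is irreducible, i.e.\ its underlying reduced subscheme is a single prime divisor on $\mathscr{X}_\nu$; it need not be reduced precisely when $\pi$ is not square-free in $B$, e.g.\ when $B$ is ramified over $\oo$.

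I expect the main obstacle to be pinning down the right hypotheses so that ``$B$ is local'' is actually true: over a general (non-complete, non-Henselian) DVR a finite domain can have several maximal ideals, and then $X_\nu$ genuinely breaks up. In the setting of the excerpt this is not an issue because we have already reduced to $\hat S$, i.e.\ $\oo$ is a complete DVR; I would simply state this reduction explicitly at the start of the proof and then the locality of $B$ is the assertion that a complete semilocal domain is local, which follows from the fact that a complete semilocal ring is a finite product of complete local rings together with the observation that a product of $\geq 2$ nonzero rings is never a domain. Everything else — flatness of $X$ over $\oo$, finiteness of $B$, the identification of the support of $X_\nu$ with $\Spec(B/\pi B)$ — is routine commutative algebra for horizontal divisors on an arithmetic surface.
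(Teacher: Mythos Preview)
Your argument is correct. The key step — that a domain $B$ finite over a complete DVR $\oo$ is automatically local — is exactly right: $B$ is $\pi$-adically complete and semilocal, hence a finite product of complete local rings, and a domain admits no nontrivial idempotents. From there the irreducibility of $\Spec(B/\pi B)$ is immediate.

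The paper argues differently. It writes the fraction field of $B$ as $k[t]/\alpha(t)$ for a monic irreducible $\alpha\in\oo[t]$, and then appeals to Hensel's Lemma: if $X_\nu$ were reducible one would obtain a coprime factorisation $\overline{\alpha}=fg$ in $\kappa[t]$, which lifts to a factorisation of $\alpha$ over $\oo$, contradicting irreducibility. Both arguments ultimately rest on the Henselian property of $\oo$, but the paper exploits it via polynomial lifting while you exploit it via idempotent lifting. Your route has the advantage of never needing to relate the order $B$ to the particular monogenic order $\oo[t]/\alpha(t)$ — a step the paper leaves implicit — and it makes transparent exactly where completeness enters. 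The paper's version is perhaps more hands-on for someone who wants to see the obstruction concretely as a factorisation of a minimal polynomial.
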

\begin{proof}
There exists a number field $L$ and an order $R$ in $L$ such that $X$ 
is isomorphic to $\Spec(R)$. Write $L = k[t]/\alpha(t)$, 
where $\alpha$ monic and irreducible with integral coefficients. 
Let $\kappa$ denote the residue field of $k$, and $\overline{\alpha}$ the image of $\alpha$ in $\kappa[t]$. If $X\uv$ is not irreducible, then there exist $f$, $g \in \kappa[t]$ coprime monic polynomials such that $f \cdot g = \overline{\alpha}$. This factorization of $\overline{\alpha}$ lifts to a factorization of $\alpha$ by Hensel's Lemma. 
\end{proof}

Now it is easy to check 
whether $X\uv \cap Y = \emptyset$; if so, $\intersect{X}{Y}=0$. Further, if $X\uv \subset Y$ and $X\uv$ is not contained in any other fibral prime divisor, then $\intersect{X}{Y}=\deg(X)$; this is easily seen since locally $Y=\variety[U]{\nu}$, and we can take the norm of $\nu$ from the field of fractions $FF(X)$ down to $k$. 

We are left with the case where $X\uv$ lies at the intersection of several fibral prime divisors. We find equations $\overline{f}_1,\ldots,\overline{f}_r \in A \otimes_{\oo_k} \kappa(\nu)$ for $Y$ as a subscheme of $U\uv$. Then choose any lifts $f_i$ of $\overline{f}_i$ to $A$. Now we need two easy results in commutative algebra:

\begin{lema}\label{lema:lifting_gens}
 let $R$ be a ring, $p \in R$ any element, and $I$ an ideal containing $p$. Suppose we have $\listdef{t}{r} \in R$ such that the images $\overline{t}_1, \ldots, \overline{t}_r$ in $R/(p)$ generate the image of $I$ in $R/(p)$. Then $I=(\listdef{t}{r},p)$. 
\end{lema}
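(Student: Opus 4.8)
The plan is to reduce to a direct computation in the quotient ring $R/(p)$. First I would observe that, since $p \in I$, the ideal $J \defeq (\listdef{t}{r}, p)$ is contained in $I$; the whole content of the lemma is the reverse inclusion $I \subseteq J$. To prove this, I would take an arbitrary $x \in I$ and look at its image $\overline{x}$ in $R/(p)$. By hypothesis $\overline{x}$ lies in the image of $I$, which is generated by $\overline{t}_1, \ldots, \overline{t}_r$, so we can write $\overline{x} = \sum_{i=1}^r \overline{a}_i \overline{t}_i$ for some $\overline{a}_i \in R/(p)$. Lifting the $\overline{a}_i$ arbitrarily to elements $a_i \in R$, the difference $x - \sum_i a_i t_i$ maps to zero in $R/(p)$, hence lies in $(p)$.

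Putting this together, $x = \sum_i a_i t_i + r p$ for some $r \in R$, which exhibits $x$ as an element of $J = (\listdef{t}{r}, p)$. Since $x \in I$ was arbitrary, $I \subseteq J$, and combined with the trivial inclusion $J \subseteq I$ we get $I = J$, as claimed.

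There is really no main obstacle here: the statement is a routine application of the correspondence between ideals of $R/(p)$ and ideals of $R$ containing $p$, together with the fact that lifting a finite generating set of $I/(p)$ and throwing in $p$ recovers generators of $I$. The only point to be careful about is that the lifts $a_i$ of the coefficients $\overline{a}_i$ can be chosen completely freely — any choice works, because the resulting error is absorbed into the $(p)$ term. This is precisely the form in which the lemma will be applied in the next step: lifting equations $\overline{f}_i$ for the fibral divisor $Y$ inside $U\uv$ to elements $f_i \in A$, the ideal of $Y$ as a subscheme of $U$ is then $(\listdef{f}{r}, \nu)$.
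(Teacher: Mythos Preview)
Your argument is correct and matches the paper's proof essentially line for line: show $I \subseteq (t_1,\ldots,t_r,p)$ by lifting coefficients of an expression for $\overline{x}$ and absorbing the discrepancy into $(p)$, then note the reverse inclusion. The only place you are slightly quicker than the paper is in asserting $J \subseteq I$: this needs $t_i \in I$, which the paper justifies explicitly by observing that $\overline{t}_i \in I/(p)$ forces $t_i$ to differ from some $g_i \in I$ by an element of $(p) \subseteq I$.
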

\begin{proof}
Let $x \in I$. Write $\overline{x}$ for the image of $x$ in $R/(p)$, and write $\overline{x}=\sum_{i=1}^r \overline{\alpha}_i\overline{t}_i$ for some $\overline{\alpha} \in R/(p)$. Choose lifts $\alpha_i$ of $\overline{\alpha}_i$ to $R$. Then $y \defeq x-\sum_{i=1}^r\alpha_it_i$ has the property that $y \in p\cdot R$. Hence $x$ is in $(t_i,\ldots,t_r,p)$ and so $I \subset (t_1,\ldots,t_r,p)$. Now $p \in I$ by assumption, and $\overline{t}_i \in I/(p)$, so there exists $g_i$ in $I$ such that $g_i - t_i \in p \cdot R$, so $t_i \in I$. 
\end{proof}

\begin{lema}\label{lema:finding_generator}
 Let $R$ be a regular local ring, and $\listdef{t}{r} \in R$ be such that $I \defeq (\listdef{t}{r})$ is a prime ideal of height 1. Now $I$ is principal; write $I = (t)$. Then there exists an index $i$ and a unit $u\in R$ such that $t_i = tu$. In particular, there exists an index $i$ with $I=(t_i)$.  
\end{lema}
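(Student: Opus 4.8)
The plan is to exploit that in a regular local ring we have unique factorization, so a height-one prime is principal, and then to argue that at least one of the given generators $t_i$ must differ from the uniformizer $t$ by a unit rather than by a proper multiple. First I would recall that a regular local ring is a UFD (the Auslander--Buchsbaum theorem), hence an integrally closed Noetherian domain in which every height-one prime is principal; this justifies writing $I = (t)$ for some $t \in R$ which is a prime element, so in particular $t$ is irreducible. Since each $t_i \in I = (t)$, I can write $t_i = t s_i$ for some $s_i \in R$.

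Next I would derive a contradiction from the assumption that \emph{no} $s_i$ is a unit. If every $s_i$ lies in the maximal ideal $\mathfrak{m}$, then $I = (t_1, \ldots, t_r) = (ts_1, \ldots, ts_r) \subseteq t\mathfrak{m} = \mathfrak{m} I$. But $I$ is a finitely generated module over the Noetherian local ring $R$, so Nakayama's lemma forces $I = 0$, contradicting that $I$ is a height-one prime (in particular nonzero). Hence some $s_i$ is a unit; call it $u$, so $t_i = tu$ and therefore $(t_i) = (t) = I$, which gives both assertions of the lemma.

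I would be slightly careful about one point: strictly one only needs that $R$ is a Noetherian local domain in which height-one primes are principal, so the appeal to the full strength of "regular $\Rightarrow$ UFD" can be replaced by the weaker statement that a regular local ring is integrally closed and hence, being also Noetherian of the appropriate dimension behaviour, has the principal-height-one-prime property; but invoking the UFD property is cleanest and is standard. The Nakayama step is the only place anything substantive happens, and it is entirely routine; the main (very mild) obstacle is just making sure the hypotheses "prime", "height $1$", hence "nonzero" are all genuinely used, since without nonzeroness the conclusion would fail (the zero ideal is generated by $0 = t\cdot 0$ with no unit multiple appearing).
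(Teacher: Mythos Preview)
Your proof is correct and follows essentially the same route as the paper: use that a regular local ring is a UFD to write $t_i = t s_i$, and then argue that not all $s_i$ can lie in the maximal ideal $\mathfrak{m}$. The only (minor) difference is in the final step: where you invoke Nakayama's lemma to deduce $I=0$ from $I \subseteq \mathfrak{m}I$, the paper instead observes directly that $(s_1,\ldots,s_r)=R$ (since $t \in I$ yields $t = t\sum a_i s_i$, hence $1=\sum a_i s_i$ by cancellation in the domain $R$), which immediately contradicts $(s_1,\ldots,s_r)\subseteq \mathfrak{m}$.
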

\begin{proof}
%Fcomplex matrix
$R$ is a unique factorization domain, and so f
For each $i$ we can write 
$t_i = t^\prime_i t$ for some $t_i^\prime\in R$. Hence $I=t\cdot(\listdef{t^\prime}{r})$, and $(\listdef{t^\prime}{r})=1$. We want to show some $t^\prime_i$ 
is a unit. Suppose not; then since $A$ is local, all the $t_i^\prime$ 
lie in the maximal ideal, so $(\listdef{t^\prime}{r})$ is contained 
in the maximal ideal, a contradiction.
\end{proof}
Now from these we see that one of the $f_i$ or $\nu$ must be an 
equation for $Y$ in a neighbourhood of $X\uv$ (and it cannot be $\nu$ as $X\uv$ lies on an intersection of fibral primes). Now if any $f_i$ vanishes on $X\uv$, 
it cannot be the $f_i$ we seek. Exclude such $f_i$, and then for each of 
the remaining $f_i$ compute its norm from $FF(X)$ to the completion of $k$. 
The minimum of the valuations of such norms will be achieved by any 
$f_i$ which is a local equation for $Y$ at $X\uv$, 
and hence $\intersect{Y}{X}$ is equal to the minimum of the 
valuations of the norms.

\section{Step 4: Determining $\intersect{\overline{D}}{\overline{E}}$}
\label{sec:step_4}

Finally, we come to what appears to be the meat of the problem 
for non-Archimedean places: given two horizontal divisors $D$ and $E$ and a place $\nu \in \mathfrak{R}$, 
compute the intersection $\intersect{\overline{D}}{\overline{E}}$. However, the techniques used in previous sections  actually make this very simple. 

Fix a non-Archimedean place $\nu$. Let $\hat{S}$ denote the $\nu$-adic completion of $S$, and set $\hat{\cc} = \cc \times_S \hat{S}$. It is sufficient to compute the intersection $\intersect{X}{Y}$ where $X$ and $Y$ are prime horizontal divisors on $\hat{\cc}$; in particular (by Lemma \ref{lem:h}), the supports of $X\uv$ and $Y\uv$ are closed points of $\cc\uv = \hat{\cc}\uv$ 

Now if $\supp(X\uv) \ne \supp(Y\uv)$, then $\intersect{X}{Y}=0$. Otherwise, let $U=\Spec(A)$ be an affine open neighbourhood of $\supp(X\uv)$.
 Let $\listdef{f}{r}$ generate the ideal of $X$ on $U$; 
then by Lemma \ref{lema:finding_generator} we know that 
some $f_i$ generates the ideal of $X$ in a neighbourhood of $X\uv$. If $f_j$ vanishes on $Y$, we can throw it away. We obtain
\begin{prop}\label{prop:norms_1}
\begin{displaymath}
 \intersect{X}{Y}=\min_i \left(\ord\uv\left(f_i[Y]\right)\right)
\end{displaymath}
as $i$ runs over $\{1,\ldots,r\}$ such that $f_i$ does not vanish identically on $Y\uv$. Here $f[Y]$ is defined to be either
\begin{enumerate}
\item the norm from $FF(Y)$ to the 
completion of $k$ of the image of $f_i$ in $FF(Y)$, 
or, equivalently,
\item
$\prod_jf_i(p_j)^{n_j}$ where $Y=\sum_jn_jp_j$ over some finite extension $l/k$ (see \cite[II,\S 2, page 57]{lang1988introduction}).
\end{enumerate}
\end{prop}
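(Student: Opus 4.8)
The plan is to localise the entire computation at the single closed point $z$ of $\cc\uv$ at which $X\uv$ and $Y\uv$ meet, to single out one of the $f_i$, say $f_{i_0}$, as a local equation for $X$ at $z$, and then to establish two facts: that $f_{i_0}$ computes $\intersect{X}{Y}$ as the $\nu$-order of a norm, and that $f_{i_0}$ divides every other $f_j$ in $\oo_{\hat{\cc},z}$, so that $f_{i_0}$ realises the minimum on the right.

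First I would reduce to the essential case. If $\supp(X\uv)\ne\supp(Y\uv)$ then $\intersect{X}{Y}=0$, so I assume $\supp(X\uv)=\supp(Y\uv)=\{z\}$; by Lemma~\ref{lem:h} this is a single closed point. Since $Y$ is a horizontal prime divisor on the proper model $\hat{\cc}$ over the complete local base $\hat{S}$, the morphism $Y\to\hat{S}$ is finite and $z$ is the only closed point of $Y$; hence $R\defeq\oo_{Y,z}$ is a one-dimensional local domain, finite over $\oo_{\hat{S}}$, with $\oo_Y(Y)=R$ and $\Frac R=FF(Y)$. Thus the image of each $f_i$ in $FF(Y)$ already lies in $R$, so $f_i[Y]\in\oo_{\hat{S}}$ and $\ord\uv(f_i[Y])\ge 0$, the value $+\infty$ occurring precisely for the indices $i$ with $f_i|_Y=0$, which are exactly the discarded ones.

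Next, by Lemma~\ref{lema:finding_generator} applied to the two-dimensional regular local ring $\oo_{\hat{\cc},z}$ and the height-one prime $\mathcal I_X\oo_{\hat{\cc},z}=(f_1,\dots,f_r)\oo_{\hat{\cc},z}$, some $f_{i_0}$ generates $\mathcal I_X\oo_{\hat{\cc},z}$. Since $X\ne Y$, the ideals $\mathcal I_X\oo_{\hat{\cc},z}$ and $\mathcal I_Y\oo_{\hat{\cc},z}$ are distinct height-one primes, so neither contains the other; hence $f_{i_0}$ does not vanish on $Y$, and $f_{i_0}[Y]$ is finite. Because $(f_{i_0})$ is the prime ideal of $X$ near $z$, the divisor $\divisor_{\hat{\cc}}(f_{i_0})$ equals $X$ with multiplicity one in a neighbourhood of $z$, so its remaining components avoid $z=\supp(Y\uv)$ and contribute nothing to the pairing with $Y$; hence $\intersect{X}{Y}=\intersect{\divisor_{\hat{\cc}}(f_{i_0})}{Y}=\ord\uv(f_{i_0}[Y])$ by the norm formula for the local intersection pairing \cite[III, Lemma 2.4]{lang1988introduction}, while \cite[II, \S2]{lang1988introduction} gives the asserted equivalence of the two descriptions of $f[Y]$. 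Finally, for any other admissible $j$, the relation $f_j\in(f_{i_0})\oo_{\hat{\cc},z}$ lets me write $f_j=f_{i_0}g_j$ with $g_j\in\oo_{\hat{\cc},z}$; applying the surjection $\oo_{\hat{\cc},z}\to\oo_{Y,z}=R$ gives $f_j|_Y=(f_{i_0}|_Y)(g_j|_Y)$ with $g_j|_Y\in R$, so $g_j[Y]\in\oo_{\hat{S}}$ and $\ord\uv(g_j[Y])\ge 0$. By multiplicativity of the norm, $\ord\uv(f_j[Y])=\ord\uv(f_{i_0}[Y])+\ord\uv(g_j[Y])\ge\ord\uv(f_{i_0}[Y])$, so the minimum over the admissible indices is attained at $i_0$ and equals $\intersect{X}{Y}$.

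The one genuinely delicate point I anticipate is the middle step: unwinding Lang's formalism to identify the scheme-theoretic local intersection number with $\ord\uv$ of the norm of a \emph{local} (rather than global) equation for $X$, and checking carefully that the superfluous components of $\divisor_{\hat{\cc}}(f_{i_0})$ really do miss $z$. Everything else is routine bookkeeping with Lemmas~\ref{lem:h} and~\ref{lema:finding_generator} and with multiplicativity of norms.
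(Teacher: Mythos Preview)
Your proposal is correct and follows the same strategy as the paper's proof: isolate a local generator $f_{i_0}$ of $X$ at $z$ via Lemma~\ref{lema:finding_generator}, identify $\intersect{X}{Y}$ with $\ord\uv(f_{i_0}[Y])$ through Lang's norm formula after noting that the extra components of $\divisor(f_{i_0})$ avoid $z=\supp(Y\uv)$, and then check that $i_0$ realises the minimum. The only cosmetic difference is in the last step: the paper deduces $\ord\uv(f_i[Y])\ge\intersect{X}{Y}$ from the effective-divisor inequality $\variety[\hat{\cc}]{f_i}\ge X$, whereas you factor $f_j=f_{i_0}g_j$ in $\oo_{\hat{\cc},z}$ and invoke multiplicativity of the norm---these are two phrasings of the same observation.
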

\begin{proof}
If $f_i$ is not identically zero on $Y\uv$, then $\variety[\hat{\cc}]{f_i}$ and $Y$ have no common component and moreover $f_i$ is regular on a neighbourhood of $Y$ so $\intersect{\text{poles}_{C'}(f_i)}{Y}=0$, and so \cite[II, Lemma 2.4, p56]{lang1988introduction} shows that
\begin{equation}
 \intersect{\variety[\hat{\cc}]{f_i}}{Y}=\ord\uv\left(f_i[Y]\right). 
\end{equation}

Now $\variety[\hat{\cc}]{f_i} \ge X$, so $\ord\uv\left(f_i[Y]\right)\ge \intersect{X}{Y}$. Moreover, by lemma \ref{lema:lifting_gens} there is an index $i_0$ such that $f_{i_0}$ generates $X$ near $X\uv$, and since $X\uv=Y\uv$ is irreducible we have that 
\begin{equation}
 \begin{split}
  \intersect{X}{Y} = \intersect{\variety[\hat{\cc}]{f_{i_0}}}{Y} & = \sum_{p} \text{length}_{\oo_p}\left(\frac{\oo_p}{f_{i_0},I_Y}\right)\\
 & = \text{length}_{\oo_{X\uv}}\left(\frac{\oo_{X\uv}}{f_{i_0},I_Y}\right)
 \end{split}
\end{equation}
where the sum is over closed points $p$ of $\hat{\cc}$ lying over $\nu$, and $I_Y$ is the defining ideal for $Y$ in the local ring under consideration. Now any other $f_i$ will have 
\begin{equation}
\intersect{\variety[\hat{\cc}]{f_i}}{Y}\ge \intersect{\variety[\hat{\cc}]{f_{i_0}}}{Y}, 
\end{equation}
so the result follows. 
\end{proof}

As regards the computation of the $f_i[Y]$, definitions (1) and (2) given in Proposition \ref{prop:norms_1} lead to slightly different approaches, 
but both make use of Pauli's algorithms \cite{pauli2001computation}. 
In our implementation, discussed in Section~\ref{sec:examples},
we use (2)
as it seems easier; however (1) may lead to an implementation
that is faster in practice.

\section{Step 5: Computing $g_{D,\nu}(E)$}
\label{sec:step_5}

Finally, we must compute the Archimedean contribution. Fix for the remainder of this section an embedding $\sigma$ of $k$ in $\ccc$ corresponding to a place $\nu \in M_k^\infty$. Let $C_\sigma$ denote the Riemann surface corresponding to $C \times_{k,\sigma} \ccc$.

\subsection{The PDE to be Solved}
\label{The pde we need to solve}
%We will make use of the following observation of B. Gross in `Arithmetic Geometry' \cite{cornell1986arithmetic}:
As a starting point, we take \cite[Chapter 13, Theorem 7.2]{lang1983fundamentals}, which we summarise here.

Given a divisor $a$ on $C_{\sigma}$ of degree zero, let $\omega$ be a differential form on $C_{\sigma}$ such that the residue divisor $\res(\omega)$ equals $a$ (such an $\omega$ can always be found using the Riemann-Roch Theorem). Normalise $\omega$ by adding on holomorphic forms until the periods of $\omega$ are purely imaginary. Let 
\begin{equation}
d g_a \defeq \omega + \bar{\omega}.
	\label{pde1}
\end{equation}
Then $g_a$ is a Green function for $a$. 
Thus it remains to find, normalise and integrate such a form $\omega$.

\subsection{Application of theta functions to the function theory of hyperelliptic curves}

We can use $\vartheta$-functions to solve the partial
differential equation \eqref{pde1} of Section \ref{The pde we need to solve}, in a very simple way. For background on $\vartheta$-functions we refer to the first two books of the `Tata lectures on theta' trilogy, \cite{mumford1983tata}, \cite{mumford1984tata}. $\vartheta$-functions are complex analytic functions on $\mathbb{C}^g$ which satisfy some quasi-periodicity conditions, thus they are an excellent source of differential forms on the (analytic) Jacobian of $C_{\sigma}$. To get from this a differential form on $C_{\sigma}$ we simply use that $C_{\sigma}$ is canonically embedded in $\Jac(C_{\sigma})$ by the Abel-Jacobi map, so we can pull back forms from $\Jac(C_{\sigma})$ to $C_{\sigma}$.

% We let $\alpha$ denote the map from the group $Div(C_{\sigma})$ of divisors on $C_{\sigma}$ to $Jac^0(C_{\sigma})(\mathbb{C})$ thought of as a complex torus, sending a divisor $D$ to the point corresponding to the equivalence class of $D-\deg(D) . \infty_C_{\sigma}$. Thus the restriction of $\alpha$ to effective divisors of degree one (points!) can be thought of as the Abel-Jacobi map. 

Fix a symplectic homology basis $A_i,B_i$ on $C_{\sigma}$ as in \cite{mumford1984tata}; by this we mean that if $i(-,-)$ denotes the intersection of paths, then we require that the $A_i,B_i$ form a basis of $H_1(C_{\sigma},\mathbb{Z})$ such that
\[i(A_i,A_j) = i(B_i,B_j) = 0 \; \text{for} \; i \neq j\]
and
\[i(A_i,B_j) = \delta_{ij}.\]
We also choose a basis $\omega_1,\ldots \omega_g$ of holomorphic 
1-forms on $C_{\sigma}$, normalised such that 
\[\int_{A_i} \omega_j = \delta_{ij}.\]

We recall the definition and some basic properties of the multivariate $\vartheta$-function:

\begin{equation}
\label{theta}
\vartheta(z;\Omega)\defeq \sum_{\underline{n} \: \text{in} \: \mathbb{Z}^g} \exp (\pi i \underline{n} \Omega \underline{n}^\textsc{T} + 2 \pi i \underline{n}\cdot z)  
\end{equation}
which converges  %(CHECK HOW WELL!)
 for $z$ in $\mathbb{C}^g$ and $\Omega$ a $g \times g$ symmetric complex matrix with positive definite imaginary part. The $\vartheta$-function satisfies the following periodicity conditions for $\underline{m}, \underline{n}$ in $\mathbb{Z}^g$:
\begin{equation}
\label{z periodicity}
\vartheta(z+\underline{m}; \Omega) = \vartheta(z; \Omega),
\end{equation}
\begin{equation}
\label{t periodicity}
    \vartheta(z+\underline{n}\Omega;\Omega) = \exp(-\pi i \underline{n} \Omega \underline{n}^{\textsc{T}} -2 \pi i \underline{n} z)\,\vartheta(z;\Omega) .
\end{equation}

We will set $\Omega$ to be the period matrix of the analytic Jacobian of $C_{\sigma}$ with respect to the fixed symplectic homology basis (as in  \cite{mumford1984tata}), and $z$ will be a coordinate on the analytic Jacobian. This means that 
\[ \Omega_{ij} = \int_{B_i} \omega_j .\]
Let 
\[\delta^\prime \defeq \left(\frac{1}{2},\frac{1}{2},\ldots ,\frac{1}{2},\frac{1}{2}\right) \in \frac{1}{2}\mathbb{Z}^g\]

\[\delta^{\prime\prime}\defeq\left(\frac{g}{2},\frac{g-1}{2},\ldots ,1,\frac{1}{2}\right) \in \frac{1}{2}\mathbb{Z}^g\]
\[ \Delta \defeq \Omega \cdot\delta^\prime + \delta^{\prime\prime}.\]

Then \cite[Theorem 5.3, part 1]{mumford1984tata} tells us that 
$\vartheta(\Delta - z)=0$ if and only if
there are $P_1,\ldots P_{g-1}$ in $C_{\sigma}$ such that 
\begin{equation*}
z \equiv \sum_{i=1}^{g-1} \int_\infty^{P_i} \underline{\omega} \pmod{\mathbb{Z}^g+\Omega\mathbb{Z}^g}.
 \end{equation*}
This is a crucial result which allows us to construct a quasifunction on $\Jac(C_{\sigma})$ with prescribed zeros, and from this obtain the Green function we seek.

\subsection{Solution of the Partial Differential Equation}
Let $D$, $D_0$ be two effective reduced divisors of degree $g$ on $C_{\sigma}$ with disjoint support, containing no Weierstrass points or points at infinity, nor any pairs $p+q$ of points such that $p=\inv(q)$. Then the classes $[\oo(D-g \cdot \infty)]$ and $[\oo(D_0-g \cdot \infty)]$ lie outside 
the $\vartheta$-divisor on the Jacobian; indeed, the association 
$D \mapsto [\oo(D-g\cdot \infty)]$ is an isomorphism from divisors 
with the above properties to $\Jac(C_\sigma)\setminus\vartheta$, see \cite[3.31]{mumford1984tata}. Write $\alpha:\Div(C_{\sigma}) \rightarrow \Jac(C_{\sigma})$ for the map sending a divisor $E$ to the class $[\oo(E-\deg(E)\cdot\infty)]$. 

For $z$ in $\Jac(C_{\sigma})$ we set 
\[G(z) = \frac{\vartheta(z+\Delta - \alpha(D))}{\vartheta(z+\Delta - \alpha(D_0))}.\]
Then for $p$ in $C_{\sigma}$ we set $F(p) = G(\alpha(p))$ so
\begin{equation}
\label{F}
F(p) = \frac{\vartheta(\alpha(p)+\Delta - \alpha(D))}{\vartheta(\alpha(p)+\Delta - \alpha(D_0))}.
\end{equation}

If we let $\omega = d \log F(p)$ then it is clear that $\res(\omega) = D-D_0$. It then remains to normalise $\omega$ to make its periods purely imaginary, and then integrate it. We have a homology basis $A_i,B_i$, and we find:
\[
\int_{A_k} \omega = \int_{A_k} d \log F(p) = \log G(\alpha(p)+e_k) - \log G(\alpha(p)) = 0\]
(where $e_k = (0,0,\ldots 0,1,0 \ldots ,0)$ with the $1$ being in the
$k$-th position), and

\begin{equation*}
\begin{split}
\int_{B_k} \omega = \int_{B_k} d \log F(p) = \log G(\alpha(p)+\Omega.e_k) - \log G(\alpha(p)) \\
= 2\pi i \, e_k ^T \cdot (\alpha(D)-\alpha(D_0)).
%= 2\pi i \underbrace{\left[ \alpha(D)-\alpha(D_0)\right]_k}_{\text{ $k^{th}$ component}}
\end{split}
\end{equation*}
From this we can deduce that the normalisation is

\begin{equation*}
\begin{split}
\omega = d & \log \left[ \frac{\vartheta(\alpha(p)+\Delta - \alpha(D))}{\vartheta(\alpha(p)+\Delta - \alpha(D_0))} \right]   \\
& - 2 \pi i \left[ (\im( \Omega))^{-1} \im ( \alpha(D)-\alpha(D_0)) \right].\left[\begin{array}{c} \omega_1 \\ \omega_2 \\ \vdots \\ \omega_g \end{array} \right]
\end{split}
\end{equation*}
where $p$ is a point on $C_{\sigma}$.

Now we integrate to get the Green function $g_{D-D_0}(p) = \int_{\infty_{C_{\sigma}}}^p \omega + \overline{\omega}$, where $\infty_{C_\sigma}$ denotes the point at infinity on $C_\sigma$:

\begin{equation*}
\begin{split}
g_{D-D_0}(p) =  2 \log & \left|\frac{\vartheta(\alpha(p)+\Delta - \alpha(D))}{\vartheta(\alpha(p)+\Delta - \alpha(D_0))} \right| 
\\  \\ &+ 4 \pi  \left[ (\im( \Omega))^{-1} \im ( \alpha(D)-\alpha(D_0)) \right].\im \left( \int_{\infty_{C_{\sigma}}}^p\left[\begin{array}{c} \omega_1 \\ \omega_2 \\ \vdots \\ \omega_g \end{array} \right] \right)
%\\ \\
%+ 2 \pi i \left[ (\im \Omega)^{-1} \im ( \alpha(D)-\alpha(D_0)) \right].\int_{\infty_C_{\sigma}}^p \overline{ \left[\begin{array}{c} \omega_1 \\ \omega_2 \\ \vdots \\ \omega_g \end{array} \right]}
\\ \\
= 2 \log & \left|\frac{\vartheta(\alpha(p)+\Delta - \alpha(D))}{\vartheta(\alpha(p)+\Delta - \alpha(D_0))} \right| \\
 &+ 4 \pi   (\im (\Omega))^{-1}  \cdot \im ( \alpha(D)-\alpha(D_0)) \cdot \im \left( \alpha(p) \right).
\end{split}
\end{equation*}

%Again, this has been implemented in MAGMA, so 
Given divisors $D$, $D_0$ and $E$, $E_0$ containing no Weierstrass 
points or infinite points or pairs of 
points which are involutions of each other, and having disjoint support 
we can this formula to compute 
$\frac{1}{2}g_{D-D_0} [ E-E_0]$ which is 
simply the product over points $p \in \supp(E-E_0)$ 
of the complex absolute value of $g(p)$. We are done.

\section{Examples}
\label{sec:examples}

We have created a test implementation of the above algorithm in {\tt MAGMA}. 
%At the time this was done, MAGMA was not able to compute regular models of hyperelliptic curves, and so this restricted us to the case where the projective closure of $C$ in $\p_{\oo_k}(1,1,g+1)$ is regular, or at least where the divisors considered have some positive multiple whose Mumford representation is containd in the smooth locus of the irreducible component of the identity. Under these restrictions, 
The following results were obtained using a 2.50 GHz Intel Core2 Quad  CPU  Q9300:

First, we let $C/\mathbb{Q}$ be the genus $3$ hyperelliptic curve given by
\[
C: y^2 = x^7-15x^3+11x^2-13x+25.
\]
Let $D$, $E$ be the points on the Jacobian corresponding to the 
degree $0$ divisors
$(1,3)-\infty$, $E=(0,-5)-\infty$ respectively.
% where we let $D$ and $E$ denote the points on the Jacobian corresponding to the points $(1,3)$ and $(0,-5)$ on the curve respectively. 
We obtain the following (writing $\hat{h} = \hat h_{2 \vartheta}$):
\[\hat{h}(D) = 1.77668 \ldots\]
\[\hat{h}(E) = 1.94307 \ldots\]
\[\hat{h}(D+E) = 4.35844 \ldots\]
\[\hat{h}(D-E) = 3.08107 \ldots\]
\[2\hat{h}(D)+2\hat{h}(E) - \hat{h}(D+E) - \hat{h}(D-E) = 1.26217 \times 10^{-28}\]
with a total running time of 31.75 seconds.
We note that our result is consistent with the parallelogram law for the
N\'eron-Tate height, which provides a useful check that our implementation
is running correctly.

Next we give two families of curves of increasing genus. 
Firstly the family $y^2 = x^{2g+1}+2x^2-10x+11$ with $D$ 
denoting the point on the Jacobian corresponding to the degree $0$
divisor $(1,2) - \infty$ (all times are in seconds unless otherwise stated):

\begin{tabular}{l|c|r}
$g$&$\hat{h}(D)$ & time\\ \hline
1 & $1.11466 \ldots$ & 1.94\\
2 & $1.35816 \ldots$ & 6.44\\
3 & $1.50616 \ldots$ & 15.10\\
4 & $1.61569 \ldots$ & 32.71\\
5 & $63.4292 \ldots$ & 72.23\\
6 & $1.77778 \ldots$ & 212.37\\
7 & $51.0115 \ldots$ & 20 minutes\\
8 & $1.89845 \ldots$ & 3 hours\\
9 & $78.8561 \ldots$ & 16 hours\\
\end{tabular}

Now we consider the family $y^2 = x^{2g+1}+6x^2-4x+1$ with $D$ denoting the point $(1,2)+(0,1) - 2\cdot \infty$ on the Jacobian:

\begin{tabular}{l|c|r}
$g$&$\hat{h}(D)$ & time/seconds\\ \hline
1 & $1.41617 \ldots$ & 2.06\\
2 & $1.37403 \ldots$ & 6.73\\
3 & $1.50396 \ldots$ & 15.62\\
4 & $1.40959 \ldots$ & 32.60\\
5 & $1.70191 \ldots$ & 76.48\\
6 & $1.81093 \ldots$ & 291.17\\
7 & $1.71980 \ldots$ & 1621.50\\
\end{tabular}

A fully-functioning and more efficient implementation of N\'eron-Tate 
height computations is currently being carried out in {\tt MAGMA} 
by J. S. M\"{u}ller, 
combining ideas from this paper with those from his own PhD 
thesis, \cite{mullerthesis}. 
M\"{u}ller's approach to computing $\intersect{D}{E}$ is quite different 
from that used here; he uses Gr\"obner bases to compute directly the 
$\oo_p$-length of the modules $\frac{\oo_p}{I_D+I_E}$ as $p$ 
runs over closed points of the special fibre, in contrast 
to the method in this paper where we compute norms down to the ground field and then compute valuations.

\bibliographystyle{alpha} %amsplain}
\bibliography{../../bibtex/prebib.bib}

\bigskip

\end{document}